\theoremstyle{plain}
\newtheorem{prop}{Proposition}
\newtheorem{thm}[prop]{Theorem}
\newtheorem{cor}[prop]{Corollary}
\newtheorem{lem}[prop]{Lemma}
\newtheorem*{thmA}{Theorem A}
\newtheorem*{corB}{Corollary B}
\newtheorem*{thmC}{Theorem C}
\theoremstyle{definition}
\theoremstyle{remark}
\numberwithin{prop}{section}
\numberwithin{ques}{section}
\numberwithin{equation}{section}
\DeclareMathOperator{\cl}{cl}
\newcommand{\ca}[1]{\mathcal{#1}}
\newcommand{\Z}{\mathbb{Z}}
\newcommand{\triv}{\{1\}}                                   
\newcommand{\caB}{\ca{B}}
\newcommand{\caC}{\ca{C}}
\newcommand{\tU}{\widetilde{U}}
\newcommand{\tV}{\widetilde{V}}
\newcommand{\argu}{\hbox to 7truept{\hrulefill}}
\newcommand{\G}{\ca{G}}
\newcommand{\rdd}{\mathrm{red}}
\newcommand{\bE}{\mathbf{E}}
\begin{document}
\title[Virtually free pro-p products]{Virtually free pro-p products}
\author{Th. Weigel and P. A. Zalesski\u i}
\date{\today}
\address{Th. Weigel\\
Universit\`a di Milano-Bicocca\\
U5-3067, Via R.Cozzi, 55\\
20125 Milano, Italy}
\email{thomas.weigel@unimib.it}

\address{P. A. Zalesski\u i\\
Department of Mathematics\\
University of Brasilia\\
70.910 Brasilia DF\\
Brazil}
\email{pz@mat.unb.br}

\begin{abstract}
It is shown that a finitely generated pro-$p$ group $G$ which is a
virtually free pro-$p$  product splits either as a free pro-$p$
product with amalgamation or as a pro-$p$ HNN-extension over a
finite $p$-group. More precisely, $G$ is the pro-$p$ fundamental
group of a finite graph of finitely generated pro-$p$ groups with
finite edge groups. This generalizes previous results of
W.~Herfort and the second author (cf. \cite{wolfpav:virt}).
\end{abstract}

\subjclass[2010]{Primary 20E18, secondary 20E05, 20E06, 20J06}

\maketitle

\section{Introduction}
\label{s:intro}
In 1965, J-P.~Serre showed that a torsion free virtually free pro-$p$ group must be free
(cf. \cite{ser:coho}). This motivated him to ask the question whether the same statement holds
also in the discrete context. His question was answered positively some years later.
In several papers (cf. \cite{stall:bull}, \cite{stall:ends}, \cite{swan:coh1}),
J.R.~Stallings and R.G.~ Swan showed that free groups are precisely the groups of cohomological dimension $1$, and at the same time
J-P.~Serre himself showed that in a torsion free group $G$ the cohomological dimension of a
subgroup of finite index coincides with the cohomological dimension of $G$ (cf. \cite{ser:cohdis}).

One of the major tools for obtaining this type of result - the theory of
ends - provided deep results also in the presence of torsion. The
first result to be mentioned is `Stallings' decomposition theorem'
(cf. \cite{stall:deco2}). It generalizes the previously mentioned
result to virtual free products.

\begin{thm}[J.R.~Stallings]
\label{thm:stall}
Let $G$ be a finitely generated group containing a subgroup of finite index
which is a non-trivial free product. Then $G$ splits either as a free product with amalgamation or as an HNN-extension over a finite group.
\end{thm}

The purpose of this paper is to prove a pro-$p$ analogue of Theorem~\ref{thm:stall}.

\begin{thmA}
Let $G$ be a finitely generated pro-$p$ group containing an open subgroup
which is a non-trivial free pro-$p$ product. Then $G$ splits either as a free pro-$p$ product with amalgamation or as a pro-$p$ HNN-extension over a finite $p$ group.
\end{thmA}

In the torsion free case Theorem A yields a splitting  of $G$
into a non-trivial free pro-$p$ product.

\begin{corB}
Let $G$ be a finitely generated torsion free
pro-$p$ group which is a virtual free pro-$p$ product. Then $G$ is
a non-trivial free pro-$p$ product.
\end{corB}

In contrast to the proof of Theorem~\ref{thm:stall} which uses the theory of ends,
the proof of Theorem~A is accomplished
by using purely combinatorial methods in pro-$p$ group theory, and the description of finitely generated
virtually free pro-$p$ groups obtained by W.~Herfort and the second author (cf. \cite{wolfpav:virt}).
In fact, the techniques of pro-$p$ groups acting on pro-$p$ trees are used in order to obtain the following
more conceptual version of Theorem~A
(cf. Thm.~\ref{thm:virtfreeprod}).

\begin{thmC}
 Let $G$ be a finitely generated pro-$p$
group containing an open subgroup $H$ which has a non-trivial
decomposition as free product, i.e., there exists non-trivial
closed subgroups $A,B\subsetneq H$ such that $H=A\amalg B$. Then
$G$ is isomorphic to the pro-$p$ fundamental group of a finite
graph of pro-$p$ groups with finite edge stabilizers.
\end{thmC}

Two achievements had caused dramatic advances in
the combinatorial theory of groups; Bass-Serre theory of groups acting on trees
and `Stallings' decomposition theorem' of groups with infinitely many ends.
The results of this paper contribute to the theory of pro-$p$ groups acting on
pro-$p$ trees. Nevertheless, the absense of a `Stallings' decomposition theorem'
in the pro-$p$ context is still overshadowing the combinatorial theory of pro-$p$ groups.

\section{Preliminaries}
\label{s:prelim}
We will use the notion of graph as introduced by
J-P.~Serre in \cite[\S 2.1]{ser:trees}.

\subsection{Finite graphs of pro-$p$ groups}
\label{ss:grofprop}
Let $\Gamma$ be a finite connected graph.
A {\it graph of groups} $\G$ based on $\Gamma$
is called a {\it finite graph of pro-$p$ groups},
if all vertex groups $\G(v)$, $v\in V(\Gamma)$, and all
edge groups $\G(e)$, $e\in E(\Gamma)$, are pro-$p$ groups,
and if all the group monomorphisms $\alpha_e\colon\G(e)\to\G(t(e))$
are continuous. So, if $(\G,\Gamma)$ is an (abstract)
graph of groups such that
all vertex and edge groups are finitely generated pro-$p$ groups, then
by a theorem of J-P.~Serre (cf. \cite[\S 4.8]{ribzal:prof}),
$(\G,\Gamma)$ is a finite graph of pro-$p$ groups.

A finite graph of pro-$p$ groups $(\G,\Gamma)$ is said to be
{\it reduced}, if for every geometric edge $\{e,\bar{e}\}$ which is not  a loop
neither $\alpha_e\colon\G(e)\to\G(t(e))$
nor $\alpha_{\bar{e}}\colon \G(e)\to\G(o(e))$ is an isomorphism.
Any finite graph of pro-$p$ groups can be transformed in a reduced finite graph of pro-$p$ groups
by the following procedure: If $\{e,\bar{e}\}$ is a geometric edge which is not a loop, we can
remove $\{e,\bar{e}\}$ from the edge set of $\Gamma$, and identify $o(e)$ and $t(e)$
in a new vertex $y$. Let $\Gamma^\prime$ be the finite graph given by
$V(\Gamma^\prime)=\{y\}\sqcup V(\Gamma)\setminus\{o(e),t(e)\}$ and
$E(\Gamma^\prime)=E(\Gamma)\setminus\{e,\bar{e}\}$, and let
$\G^\prime$ denote the finite graph of pro-$p$ groups based on $\Gamma^\prime$ given by
$\G^\prime(y)=\G(o(e))$
if $\alpha_e$ is an isomorphism, and $\G^\prime(y)=\G(t(e))$ if $\alpha_e$ is not an isomorphism.
This procedure can be continued until $\alpha_e$ is not surjective for all edges not defining loops.
The resulting finite graph of pro-$p$ groups $(\G_{\rdd},\Gamma_{\rdd})$ is reduced.


\subsection{The fundamental pro-$p$ group of a finite graph of finitely generated pro-$p$ groups}
\label{ss:proptree} Let $(\G,\Gamma)$ be a finite graph of
finitely generated pro-$p$ groups. We define the {\it fundamental
pro-$p$ group} $G=\Pi_1(\G,\Gamma, v_0)$, $v_0\in V(\Gamma)$, of
$(\G,\Gamma)$ to be the pro-$p$ completion of the usual
fundamental group $\pi_1(\G,\Gamma, v_0)$ 
(cf. \cite[\S5.1]{ser:trees}). In general, $\pi_1(\G,\Gamma, v_0)$ does not
have to be residually $p$, but this will be the case in all of our
considerations. In particular, edge and vertex groups will be
subgroups of $\Pi_1(\G,\Gamma, v_0)$. Since $\G(e)$, $\G(v)$ are
finitely generated, by a theorem of J-P.~Serre (cf. \cite[\S 4.8]{ribzal:prof}),
our definition is equivalent to the original
definition of the fundamental group of a graph of groups in the
category of pro-$p$ groups (cf. \cite{melpav:trees}). Note that
the previously mentioned reduction process does not change the
fundamental pro-$p$ group, i.e., one has a canonical isomorphism
$\Pi_1(\G,\Gamma, v_0)\simeq \Pi_1(\G_{\rdd},\Gamma_{\rdd},w_0)$.
So, if the pro-$p$ group $G$ is the fundamental group of a finite
graph of pro-$p$ groups, we may assume that the finite graph of
pro-$p$ groups is reduced.


\subsection{The fundamental pro-$p$ group of a finite graph of finite $p$-groups}
\label{ss:finsgrp}
Let $(\G,\Gamma)$ be a finite graph of finite $p$-groups.
By  \cite[Thm.~ 3.10]{melpav:trees}, every finite subgroup of $G=\Pi_1(\G,\Gamma, v_0)$
is conjugate to a subgroup
of a vertex group of $(\G,\Gamma)$.  Hence $G$ has only finitely many finite subgroups up to conjugation.
In particular, every maximal finite subgroup of $G$ is $G$-conjugate to a vertex
group of $(\G,\Gamma)$, and the converse is true if $(\G,\Gamma)$ is a reduced finite graph of
finite $p$-groups.


\section{Virtually free pro-$p$ products}
\label{s:vfreeprop}


\subsection{Virtually free pro-$p$ groups}
\label{ss:virfreeprop} 
A pro-$p$ group $G$ will be called to be a 
{\it free pro-$p$ product} if there exist non-trivial closed subgroups
$A$ and $B$ such that $G=A\amalg B$. Otherwise we shall say that
$G$ is $\amalg$-indecomposable.
The following properties are well known.

\begin{prop}
\label{prop:freeprod} 
Let $H=\coprod_{i\in I} H_i\coprod F$
be a finitely generated pro-$p$ group with a $\amalg$-decompositon,
where $H_i$ are non-trivial $\amalg$-indecomposable pro-$p$-groups,  
and $F$ is a free pro-$p$ group. Then
\begin{itemize}
\item[(a)] $I$ is finite, and $H_i$, $i\in I$, and $F$ are finitely generated.
\item[(b)] Any finitely generated $\amalg$-indecomposable subgroup $A$
of $H$ is conjugate to a subgroup of $H_i$ for some $i\in I$.
Moreover, if $H=A\amalg B$ for some closed subgroup $B$ of $H$, then $A$ is conjugate to some $H_i$, $i\in I$. 
\item[(c)] $H_i\cap H_j^h=1$ if either $i\neq j$ or $h\not\in H_i$.
\item[(d)] For $K\subseteq H_i$, $K\not=\triv$, one has
$N_H(K)\subseteq H_i$. In particular, if $H_i$ is finite, so is
$N_H(K)$.
\end{itemize}
\end{prop}

\begin{proof}  (a) is obvious. The first statement of (b) follows from the pro-$p$ version of the
Kurosh subgroup theorem \cite[Thm.~4.4]{herrib:subgr} and the
second statement from  \cite[Thm.~4.3]{mel:quot}.
For (c) see Theorems 4.2 (a) and 4.3 (a) in \cite{ribzal:hori}.
In order to prove (d) take $h\in N_G(K)$. Then $K\subseteq H_i\cap
H_i^h$, and, by (c), one has $h\in H_i$.
\end{proof}

From Proposition~\ref{prop:freeprod} one concludes the following
properties for virtual free pro-$p$ products.

\begin{prop}
\label{prop:euler}
Let $(\G,\Gamma)$ be a reduced finite graph of finite $p$-groups, and
suppose that $G=\Pi_1(\G,\Gamma, v_0)$  contains an open, normal subgroup
$H=F\amalg H_1\amalg\cdots \amalg H_s$, $H_i\not=\triv$, for some
free pro-$p$ subgroup $F$ of rank $r$, $0\leq r<\infty$, such that $r+s\geq 2$.
Then one has the following.
\begin{itemize}
\item[(a)] For any edge $e$ of $\Gamma$ one has $\G(e)\cap H=\triv$; in particular, $|\G(e)|\leq |G:H|$.
\item[(b)] $|\bE(\Gamma)|\leq 2(r+s)-1$ and $|V(\Gamma)|\leq 2(r+s)$, where $V(\Gamma)$ is the
set of vertices of $\Gamma$, and $\bE(\Gamma)$ is the set of geometric edges of $\Gamma$.
\end{itemize}
\end{prop}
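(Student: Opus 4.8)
The plan is to bound the complexity of the graph $\Gamma$ by using the action of $G$ on the pro-$p$ tree $\caT$ underlying the graph of groups $(\G,\Gamma)$, restricting this action to $H$, and comparing the resulting decomposition of $H$ with its given free-product decomposition $H=F\amalg H_1\amalg\cdots\amalg H_s$. Since $G=\Pi_1(\G,\Gamma,v_0)$, the group $G$ acts on a pro-$p$ tree $\caT$ with $G\backslash\caT\cong\Gamma$ and vertex (resp. edge) stabilizers conjugate to the vertex (resp. edge) groups of $(\G,\Gamma)$. Restricting to the open normal subgroup $H$ gives an action of $H$ on $\caT$, and the structure theorem for pro-$p$ groups acting on pro-$p$ trees expresses $H$ as the fundamental pro-$p$ group of the quotient graph of pro-$p$ groups $(\caH,\Delta)$ with $\Delta=H\backslash\caT$; the natural map $\Delta\to\Gamma$ is the covering induced by $H\le G$, with fibres of size dividing $|G:H|$.

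\textbf{Step 1: Edge stabilizers meet $H$ trivially.} First I would show part (a). For an edge $e$ of $\caT$, the stabilizer $G_e$ is a finite $p$-group, hence (by Proposition~\ref{prop:freeprod}(a) applied to $H$, once we know $H_e=G_e\cap H$ is finite) conjugate in $H$ into some $H_i$. But $H_e$ is \emph{normal} in $G_e$ only up to the usual subtleties; the cleaner route is: $H_e$ is the stabilizer in $H$ of the edge $e$ for the $H$-action on $\caT$, so it is an edge group of $(\caH,\Delta)$, hence conjugate into a vertex group, hence finite; now Proposition~\ref{prop:freeprod}(c) gives that if $H_e\ne\triv$ then $N_H(H_e)$ is finite. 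On the other hand $H$ is open in $G$ and $H_e\triangleleft G_e$ would force... — here instead I would use that $H$ acts on $\caT$ and the minimal $H$-invariant subtree is all of $\caT$ (since $\caT$ is already a minimal $G$-tree and $H$ is open normal, standard), so $H$ acts on $\caT$ without global fixed point and with the same edge set up to the covering. The decisive point: a finite $p$-group acting on a pro-$p$ tree fixes a vertex, so $G_e$ fixes a vertex $v$, and then $\G(e)=G_e\le G_v$; if $G_e\cap H\ne\triv$ then, since $H$ is normal, $G_e\cap H$ is normalized by all of $G_e$, and conjugating within $H$ into some $H_i$ we contradict Proposition~\ref{prop:freeprod}(c) unless $G_e\cap H$ is contained in a single $H_i$ together with its $G_e$-conjugates — and a normal finite subgroup of $H$ inside a free product $H=F\amalg H_1\amalg\cdots\amalg H_s$ with $r+s\ge 2$ must be trivial (a nontrivial normal subgroup of such a free product is never finite, by Proposition~\ref{prop:freeprod}(b),(c) and the Kurosh-type structure of subgroups). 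Hence $\G(e)\cap H=\triv$, and consequently $|\G(e)|=|\G(e)H/H|\le |G:H|$.

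\textbf{Step 2: Euler-characteristic bound.} For part (b), I would pass to the Euler characteristic. Since $H=F\amalg H_1\amalg\cdots\amalg H_s$ with $F$ free of rank $r$, the (rational, or pro-$p$) Euler characteristic of $H$ is $\chi(H)=(1-r)-\sum_{i=1}^{s}\bigl(1-\tfrac{1}{|H_i|}\bigr)$; in particular the ``first Betti-type'' quantity governing the number of edges of any reduced graph-of-finite-$p$-groups decomposition of $H$ is controlled by $r+s$. More concretely: the reduced graph of pro-$p$ groups $(\caH,\Delta)$ decomposing $H$ must, after collapsing, have underlying graph a wedge of at most... — here I would invoke the pro-$p$ analogue of the Kurosh subgroup theorem / the structure of finitely generated virtually free pro-$p$ groups: a reduced decomposition of $H$ as fundamental group of a graph of finite $p$-groups has its first Betti number plus number of vertices bounded by $r+s$ (indeed $H$ itself \emph{is} such a group with $|V(\Delta)|=s$, $|E(\Delta)|=s-1+r$ after reducing). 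Now $\Delta\to\Gamma$ is a covering of finite graphs; for a covering of finite connected graphs one has $|E(\Delta)|\ge |E(\Gamma)|$ and $|V(\Delta)|\ge |V(\Gamma)|$ — \emph{no}, that is false for proper coverings. Instead use: $|V(\Delta)|/|V(\Gamma)|$ and $|E(\Delta)|/|E(\Gamma)|$ are both at most $|G:H|$ but at least $1$, so $|E(\Gamma)|\le|E(\Delta)|$ and $|V(\Gamma)|\le|V(\Delta)|$ \emph{do} hold since the covering is surjective on edges and vertices. From $|E(\Delta)|=r+s-1+(\text{loops from reduction})$ and $|V(\Delta)|\le r+s$, together with the fact that $\Gamma$ is reduced (so every vertex has valence $\ge 2$ unless the whole graph is trivial, excluded since $\Gamma$ is not a single vertex by Theorem~D's setup — here actually by hypothesis $G$ is not a single vertex group since it contains the large open subgroup $H$), I would conclude $|E(\Gamma)|\le 2(r+s)-1$ and $|V(\Gamma)|\le 2(r+s)$.

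\textbf{The hard part} will be Step 1: pinning down exactly why $\G(e)\cap H=\triv$, i.e., ruling out a nontrivial finite subgroup of $H$ that is stable under the larger finite group $\G(e)$. The key input must be that a nontrivial finite subgroup of the free pro-$p$ product $H=F\amalg H_1\amalg\cdots\amalg H_s$ has finite, self-normalizing (within its conjugate of $H_i$) normalizer by Proposition~\ref{prop:freeprod}(c), which is incompatible with being normalized by $\G(e)H$-translates filling out a coset of $H$ of unbounded size — more precisely, $\G(e)\cap H$ would be normalized by $\G(e)$, hence $\G(e)\le N_G(\G(e)\cap H)$, and since $H\triangleleft G$ and $\G(e)\cap H\ne\triv$ we could replace it by its core and use that $H$ has no nontrivial finite normal subgroup (as $r+s\ge 2$) to derive a contradiction. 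The bookkeeping in Step 2 (loops, the exact reduction, valence conditions on the reduced $\Gamma$) is routine graph combinatorics once Step 1 is in hand.
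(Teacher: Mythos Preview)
Your argument for part~(a) has a genuine gap. You correctly observe that $\G(e)\cap H$ is normalized by $\G(e)$, but $\G(e)$ is \emph{finite}, so this alone cannot contradict Proposition~\ref{prop:freeprod}(c). Your fallback---``replace it by its core and use that $H$ has no nontrivial finite normal subgroup''---does not work either: the core of $\G(e)\cap H$ in $G$ (or in $H$) may well be trivial, and $\G(e)\cap H$ itself is not normal in $H$. What is missing is the paper's key observation: because $(\G,\Gamma)$ is \emph{reduced}, the normalizer $N_G(\G(e))$ is infinite. Indeed, if $\{e,\bar e\}$ is not a loop then $\G(e)$ is a proper subgroup of both $\G(o(e))$ and $\G(t(e))$, so $N_{\G(o(e))}(\G(e))$ and $N_{\G(t(e))}(\G(e))$ strictly contain $\G(e)$ and together generate an infinite subgroup of $N_G(\G(e))$; if $\{e,\bar e\}$ is a loop the stable letter (or its conjugates) does the job. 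Since $H$ is normal, $N_G(\G(e))\le N_G(\G(e)\cap H)$, so $N_H(\G(e)\cap H)=N_G(\G(e)\cap H)\cap H$ is infinite, and \emph{now} Proposition~\ref{prop:freeprod}(c) gives the contradiction.

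For part~(b) your covering sketch is also problematic. The map $H\backslash\caT\to G\backslash\caT=\Gamma$ is surjective, but the fibres over distinct edges have different sizes (namely $|G:H|/|\G(e)|$, using part~(a)), so bounding $|E(\Gamma)|$ via $|E(\Delta)|$ for the \emph{reduced} $\Delta$ does not make sense: reduction collapses edges and destroys the surjection. The paper instead computes the rational Euler characteristic directly: writing $X=\pi_1(\G,\Gamma,v_0)$ and $Y=X\cap H$, multiplicativity gives
\[
r+s-1\ \ge\ -|X{:}Y|\,\chi_X\ =\ |X{:}Y|\Bigl(\textstyle\sum_{e}\frac{1}{|\G(e)|}-\sum_{v}\frac{1}{|\G(v)|}\Bigr),
\]
and then the reduced hypothesis ($|\G(t(e))|\ge 2|\G(e)|$ for edges in a maximal subtree) together with $|X{:}Y|\ge|\G(e)|$ from part~(a) yields $r+s-1\ge\tfrac12(|E(\Gamma)|-1)$. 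Your Euler-characteristic instinct was right; it is the graph-covering bookkeeping that goes astray.
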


\begin{proof}
Let $X=\pi_1(\G,\Gamma, v_0)$ be the abstract fundamental group
of the graph of groups and $Y=X\cap H$. Hence $G$ and $H$ are the
pro-$p$ completions of $X$ and $Y$, respectively. Moreover,
$|X:Y|=|G:H|$.

\noindent
(a) Suppose that $\G(e)\cap H\not=\triv$.
Since $H$ is normal in $G$,
$N_{G}(\G(e))$ normalizes $\G(e)\cap H$.
We claim that $N_G(\G(e))$ is infinite. One has to
distinguish two cases:
Case 1:  $\{e,\bar{e}\}$ is not a loop. In this case $N_G(\G(e))$ contains the infinite group
$\langle N_{\G(v)}(\G(e)),N_{\G(w)}(\G(e))\rangle$,
where $v=o(e)$, $w=t(e)$.
Case 2: $\{e,\bar e\}$ is a loop.  Let $v=t(e)=o(e)$, and let
$z_e\in G$ be the stable letter associated with $e$. If $\G(e)=\G(v)$, then
$N_G(\G(e))$ contains the infinite group $\langle z_e\rangle$.
Otherwise $N_G(\G(e))$ contains the infinite group
$\langle N_{\G(v)}(\G(e)),z_e N_{\G(v)}(\G(e))z_e^{-1}\rangle$.

Since $|G:H|<\infty$, the fact that $N_G(\G(e))$ is infinite
implies that $N_{H}(\G(e)\cap H)=N_{G}(\G(e)\cap H)\cap H$
is infinite as well contradicting Proposition~\ref{prop:freeprod}(d).
Hence one has $\G(e)\cap H=\triv$ as required.

\noindent
(b) It suffices to show the first inequality.
By \cite[\S 2.6, Ex.~3]{ser:trees}, one has
\begin{equation}
\label{eq:eulergraph}
\begin{aligned}
-\chi_X&=\sum_{e\in \bE(\Gamma)}\frac{1}{|\G(e)|}-
\sum_{v\in V(\Gamma)}\frac{1}{|\G(v)|}\\
&=-\frac{1}{|X:Y|}\cdot\chi_Y\\
&=\frac{1}{|X:Y|}\cdot\Big(r+s-1-\sum_{1\leq i\leq s}\frac{1}{|H_i|}\Big),
\end{aligned}
\end{equation}
where $\chi_X$ denotes the Euler characteristic of the finitely generated virtually free group
$X$. Thus one obtains
\begin{align}
r+s-1&\geq |X:Y|\Big(\sum_{e\in \bE(\Gamma)}\frac{1}{|\G(e)|}-
\sum_{v\in V(\Gamma)}\frac{1}{|\G(v)|}\Big).\label{eq:eulergraph3}
\intertext{As $(\G,\Gamma)$ is reduced,
for every edge $e$ in a maximal subtree $T$ of $\Gamma$ the edge group $\G(e)$ is isomorphic
to a proper subgroup of $\G(t(e))$.
Hence, $|\G(t(e))|\geq 2|\G(e)|$. 
Let $E^+(T)$ be an orientation of $T$ such that every vertex of $\Gamma$ except $v_0\in V(\Gamma)$
is the terminus of precisely one edge of $T$, and let $f\in E(T)$ be an edge satisfying $t(f)=v_0$.
Taking into account that $|\bE(T)|=|V(\Gamma)|-1$,
one concludes from (a) that}
r+s-1&\geq \frac{1}{2}\cdot\sum_{e\in \bE(\Gamma)\setminus\{f,\bar{f}\}}\frac{|X:Y|}{|\G(e)|}\geq
\frac{1}{2}\cdot (|\bE(\Gamma)|-1).\label{eq:eulergraph4}
\end{align}
This yields the claim.
\end{proof}

From Proposition~\ref{prop:euler} one concludes the following straightforward fact.

\begin{cor}
\label{cor:bound}
Let $(\G,\Gamma)$ be a reduced finite graph of finite $p$-groups, and
suppose that $G=\Pi_1(\G,\Gamma, v_0)$  contains a free open subgroup
$H$ of rank $r\geq 2$.
Then there exist finitely many reduced finite graphs of finite $p$-groups
$(\G^\prime,\Gamma^\prime)$ up to isomorphism
such that $G\simeq \Pi_1(\G^\prime,\Gamma^\prime, w_0)$.
\end{cor}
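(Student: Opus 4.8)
The plan is to show that every reduced finite graph of finite $p$-groups realizing $G$ has its combinatorial data---underlying graph, vertex groups, edge groups and attaching maps---drawn from a finite list that depends only on $G$, whence finiteness (up to isomorphism of graphs of groups) is immediate.

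First I would pass to an open \emph{normal} free subgroup, since Proposition~\ref{prop:euler} requires normality. Let $H_0=\bigcap_{g\in G}gHg^{-1}$ be the normal core of $H$ in $G$. As $[G:H]<\infty$ this is a finite intersection of open subgroups, hence $H_0$ is open in $G$ (with $[G:H_0]\leq[G:H]!$) and normal; and being open in the free pro-$p$ group $H$ it is itself a free pro-$p$ group. By the pro-$p$ Nielsen--Schreier formula $\rk(H_0)=[H:H_0](\rk(H)-1)+1\geq 2$, since $\rk(H)=r\geq 2$. Thus, writing $r_0=\rk(H_0)\geq 2$ and $m_0=[G:H_0]$---both depending only on $G$ and $H$---the subgroup $H_0$ is an open normal subgroup of $G$ of the form required by Proposition~\ref{prop:euler} (with $s=0$ and $F=H_0$). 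Now let $(\G^\prime,\Gamma^\prime)$ be any reduced finite graph of finite $p$-groups with $G\simeq\Pi_1(\G^\prime,\Gamma^\prime,w_0)$; under such an isomorphism $H_0$ goes to an open normal subgroup of $\Pi_1(\G^\prime,\Gamma^\prime,w_0)$ of the same form, so Proposition~\ref{prop:euler} yields $|V(\Gamma^\prime)|\leq 2r_0$, $|E(\Gamma^\prime)|\leq 2r_0-1$ and $|\G^\prime(e)|\leq m_0$ for every edge $e$.

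Next I would bound the vertex groups. By \cite[Thm.~3.10]{melpav:trees} applied to the original reduced decomposition $(\G,\Gamma)$, every finite subgroup of $G$ is $G$-conjugate into some vertex group $\G(v)$, and hence has order at most $M:=\max_{v\in V(\Gamma)}|\G(v)|$, a number depending only on $G$. In particular every vertex group (and every edge group) of $(\G^\prime,\Gamma^\prime)$ is a finite subgroup of $G$, so has order at most $M$.

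Finally I would assemble the count: up to isomorphism there are only finitely many finite graphs with at most $2r_0$ vertices and at most $2r_0-1$ geometric edges; only finitely many isomorphism types of finite $p$-groups of order at most $M$; for a fixed graph with prescribed vertex and edge groups only finitely many attaching monomorphisms $\alpha_e\colon\G^\prime(e)\to\G^\prime(t(e))$, these being homomorphisms between finite groups; and $\Pi_1$ does not depend on the chosen base vertex. Hence there are, up to isomorphism of graphs of groups, only finitely many reduced finite graphs of finite $p$-groups, and \emph{a fortiori} finitely many with $G\simeq\Pi_1(\G^\prime,\Gamma^\prime,w_0)$. The numerical substance is entirely contained in Proposition~\ref{prop:euler}; the points requiring care are the reduction to the normal core $H_0$ (both to make Proposition~\ref{prop:euler} applicable and to keep $r_0\geq 2$) and the observation that the vertex groups of an \emph{arbitrary} reduced decomposition are finite subgroups of $G$, hence of bounded order.
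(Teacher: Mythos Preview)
Your argument is correct and is precisely the ``straightforward'' deduction from Proposition~\ref{prop:euler} that the paper has in mind (the paper gives no further details). The passage to the normal core $H_0$ is exactly the right move to make Proposition~\ref{prop:euler} applicable, and the Nielsen--Schreier computation ensuring $r_0\geq 2$ is necessary and correctly done.

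One small simplification: you do not need to invoke the original decomposition $(\G,\Gamma)$ or \cite[Thm.~3.10]{melpav:trees} to bound the vertex groups. Since $H_0$ is a free pro-$p$ group it is torsion free, so every finite subgroup $K\subseteq G$ satisfies $K\cap H_0=\triv$ and hence $|K|\leq |G:H_0|=m_0$. In particular every vertex group $\G^\prime(v)$ of any reduced decomposition has order at most $m_0$, and the constant $M$ coming from the original decomposition is not needed. This also makes the bound depend only on the pair $(G,H)$ rather than on the particular starting decomposition.
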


Let $G=\Pi_1(\G,\Gamma,v_0)$ be the pro-$p$ fundamental group of a finite graph of finite
$p$-groups, and let $U$ be an open and normal subgroup of $G$. Then, by construction,
$\tU=\cl(\langle\,U\cap {}\G(v)^g\mid g\in G,\ v\in
V(\Gamma)\,\rangle)$ is a closed normal subgroup of $G$. By
\cite[Prop.~1.10]{ribzal:iber}, one has a natural decomposition of
$G/\tU$ as the pro-$p$ fundamental group $G/\tU=\Pi_1(\G_{U},\Gamma, v_0)$
of a finite graph of finite $p$-groups $(\G_{U},\Gamma)$, where the vertex and edge groups satisfy
$\G_{U}(x)=\G(x)\tU/\tU$, $x\in V(\Gamma)\sqcup E(\Gamma)$. Thus we have a
morphism $\eta\colon (\G,\Gamma)\longrightarrow (\G_{U},\Gamma)$
of graphs of groups such that the induced homomorphism on the
pro-$p$ fundamental groups coincides with the canonical projection
$\varphi_{U}\colon G\longrightarrow G/\tU$.

\begin{lem} 
\label{lem:connectmap} 
Let $G=\Pi_1(\G,\Gamma,v_0)$ be the pro-$p$ fundamental group of a finite graph of finite
$p$-groups, and let $H$ be an open normal subgroup of 
$G$ that decomposes as a free pro-$p$ product 
$H=\coprod_{1\leq i\leq s} H_i\coprod F$ of
finite $p$-groups $H_i$ and a free pro-$p$ group $F$.
Let $U\subseteq H$ be an open normal subgroup of $G$ such that 
$U\cap H_i\neq H_i$ for every $i\in \{1,\ldots,s\}$. If
$(\G,\Gamma)$ is reduced, then $(\G_{U},\Gamma)$ is reduced.
\end{lem}

\begin{proof} Suppose on the contrary that
there exists an edge $e$ in $\Gamma$ which is not a loop such that
for $v=t(e)$ one has $\G(v)\tU=\G(e)\tU\subseteq G/\tU$. Then, by
the second isomorphism theorem,
\begin{equation}
\label{eq:claim} \G(v)=\G(e)(\tU\cap \G(v)).
\end{equation}
As $(\G,\Gamma)$ is reduced, and thus $\G(e)\neq\G(v)$, one has
$\tU\cap \G(v)\not=\triv$.  From
Proposition~\ref{prop:freeprod}(a) one deduces that $\tU\cap
\G(v)$ is contained in some $H_i^g$ for $1\leq i\leq s$ and $g\in G$.
If $ N_{G}(\tU\cap \G(v))$ would be infinite, so would be  $
N_{H}(\tU\cap \G(v))$ contradicting
Proposition~\ref{prop:freeprod}(d). Hence $N_{G}(\tU\cap \G(v))$ is finite and equal to
$\G(v)$. In particular, for $y\in\G(v)$ one concludes that
$H_i^{gy}\cap H_i^g\not=\triv$. Hence, by
Proposition~\ref{prop:freeprod}(c), $H_i^{gy}=H_i^g$ and thus
$\G(v)\subseteq N_{G}(H_i^g)$. The maximality of $\G(v)$ and the
finiteness of $N_{G}(H_i^g)$ (cf. Prop.~\ref{prop:freeprod}(d))
imply that $\G(v)= N_{G}(H_i^g)$. By construction, $\G(e)H_i^g$
is a finite subgroup of $G$ containing $\G(v)$ (cf.
\eqref{eq:claim}). As $\G(v)$ is a maximal finite subgroup of $G$, this implies that
\begin{equation}
\label{eq:claim1}
\G(e)(\tU\cap\G(v))=\G(v)=\G(e)\,H_i^g.
\end{equation}
Since $\tU\cap\G(v)\subseteq H_i^g$, and as $\G(e)\cap
H_i^g=\triv$ (cf. Prop.~\ref{prop:euler}(a)), one concludes that
$\tU\cap\G(v)=H_i^g$.
Hence $H_i\subseteq\tU\subseteq U$
contradicting the hypothesis.
\end{proof}

The proof of the structure theorem for virtual free pro-$p$ products
(cf. Thm.~\ref{thm:virtfreeprod}) in the subsequent subsection
is based on the following result due to W.~Herfort and the second author.

\begin{thm}
\label{thm:virt}
\textup{(cf. \cite[Thm.~1.1]{wolfpav:virt})}
Let $G$ be a finitely generated pro-$p$ group with a free open subgroup $F$.
Then $G$ is the pro-$p$ fundamental group of a finite graph of finite $p$-groups
whose orders are bounded by $|G:F|$.
\end{thm}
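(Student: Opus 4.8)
The plan is to exhibit an action of $G$ on a pro-$p$ tree $T$ with finite vertex stabilisers and finite quotient graph $G\backslash T$; granted this, the pro-$p$ version of Bass--Serre theory (cf.\ \cite{ribzal:hori}, \cite{melpav:trees}) identifies $G$ with the pro-$p$ fundamental group of the finite graph of (finite) groups carried by $G\backslash T$, so that essentially all of the work is the construction of the action. Several reductions are immediate. Since $G$ is finitely generated and $F$ is open, $F$ is finitely generated of rank $r:=\rk(F)<\infty$; if $r=0$ then $F=\triv$ and $G$ is the finite $p$-group $G/F$, which is $\Pi_1$ of the one-vertex graph carrying $G$, of order $|G|=|G:F|$, so I may assume $r\geq1$ and $G$ infinite. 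Next, any finite subgroup $H\leq G$ meets each conjugate ${}^gF\cong F$ trivially, hence acts freely by left translation on the finite set $G/F$, so $|H|$ divides $|G:F|$; therefore any realisation of $G$ as a finite graph of finite $p$-groups whose vertex and edge groups are subgroups of $G$ automatically has order bounded by $|G:F|$. Finally, replacing $F$ by its normal core $\bigcap_{g\in G}{}^gF$, again open and free pro-$p$ by the pro-$p$ Nielsen--Schreier theorem (cf.\ \cite{ribzal:prof}), I may assume $F\trianglelefteq G$, and I put $Q=G/F$.

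To build the action I would proceed by iterated splitting with an accessibility bound. If $G$ is free pro-$p$ it is $\Pi_1$ of the one-vertex graph with trivial group and $r$ loops, and if $G\simeq\Z_p$ it is $\Pi_1$ of a single loop with trivial group; otherwise one invokes a \emph{splitting lemma}, to the effect that an infinite finitely generated virtually free pro-$p$ group which is neither free nor isomorphic to $\Z_p$ splits non-trivially as $A\amalg_C B$ or as a pro-$p$ $\HNN$-extension over a finite subgroup $C$. The vertex groups $A$, $B$ are again finitely generated with a free open subgroup of index at most $|G:F|$, so one recurses on them, always keeping the graph of groups reduced in the sense of \S\ref{ss:proptree}. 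Each splitting adds one geometric edge, so the procedure terminates as soon as one has an a~priori bound on the number of edges; that bound is obtained from an Euler-characteristic estimate just as in the proof of Proposition~\ref{prop:euler}(b), using that $\chi(G)=(1-r)/|G:F|$ is an invariant of $G$, that all groups occurring have order dividing $|G:F|$, and that reducedness forces $|\G(t(e))|\geq 2\,|\G(e)|$ along a maximal subtree (the stray case $r=1$, in which $G$ is virtually $\Z_p$, being handled by direct inspection). At termination no vertex group splits further over a finite subgroup, so by the splitting lemma every vertex group is finite, free of rank $\leq1$, or $\simeq\Z_p$; absorbing the latter two possibilities into loops of the trivial group leaves a finite graph of finite $p$-groups with fundamental group $G$, of order bounded by $|G:F|$ as noted above.

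The main obstacle is the splitting lemma, which for virtually free pro-$p$ groups I would establish directly, \emph{without} the general pro-$p$ Stallings theorem: such a $G$ has $\ee(G)=\ee(F)\in\{2,\infty\}$ (cf.\ Fact~\ref{fact:0ends}(c)), hence more than one $\Z_p$-end, and one runs a Stallings-type construction (cf.\ \cite{stall:ends}) in this restricted setting. In the torsion-free case this is merely the free-product decomposition already recorded; in the presence of torsion one fixes a maximal finite subgroup $V\leq G$ --- there being finitely many up to conjugacy, each embedding into $G/F$ --- notes that $V$ stabilises a unique vertex of the pro-$p$ Cayley tree of $F$, and shows that deleting a suitable $G$-orbit of edges near that vertex disconnects the tree, producing the required amalgam or $\HNN$-decomposition over the finite stabiliser of such an edge. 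The genuinely delicate step, and the technical heart of the whole matter, is to promote the free cocompact action of $F$ on its pro-$p$ Cayley tree to an action of $G$ --- equivalently, to choose that $F$-tree $Q$-equivariantly, possibly after enlarging it --- which is where the finiteness of $r$ and of $Q$ enters essentially; the remaining bookkeeping (reducedness, the accessibility estimate, and the dictionary between pro-$p$ tree actions and graphs of pro-$p$ groups) is routine.
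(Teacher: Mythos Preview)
The paper does not prove this theorem; it is imported verbatim from Herfort--Zalesski\u i \cite{wolfpav:virt} and used as a black box in the proof of Theorem~\ref{thm:virtfreeprod}, which in turn feeds into Theorem~B. So there is no proof here to compare against.

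Your sketch, however, has a genuine gap. In the logic of this paper Theorem~\ref{thm:virt} sits \emph{upstream} of the pro-$p$ Stallings decomposition (Theorem~B), so any appeal to $\ee(G)$, to Fact~\ref{fact:0ends}(c), or to a ``Stallings-type construction'' is circular here --- you note this, but your proposed workaround does not escape it. More fundamentally, your argument oscillates between two strategies and collapses entirely onto the step you yourself label ``genuinely delicate'': promoting the free $F$-action on its Cayley pro-$p$ tree to a $G$-action. Once that is done, pro-$p$ Bass--Serre theory finishes the proof immediately (the quotient $G\backslash T$ is a quotient of the finite graph $F\backslash T$, and stabilisers are finite since $F$ acts freely), so the iterated-splitting/accessibility apparatus becomes superfluous; conversely, your splitting lemma already presupposes that $G$-action --- you have a finite subgroup $V\leq G$ ``stabilising a vertex'' of the $F$-tree, but $V\cap F=\triv$, so $V$ does not even act on that tree until $G$ does. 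The construction of this $G$-action is exactly the content of \cite{wolfpav:virt}, and its proof --- which settled a question that had been open for decades --- is substantially more intricate than a one-line remark and does not pass through an end-theoretic splitting lemma. In short, you have correctly located the crux of the theorem but have not supplied any of it.
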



\subsection{Virtual free pro-$p$ products}
\label{ss:virtppp}
The following theorem gives a description of the structure of
virtual free pro-$p$ products.

\begin{thm}
\label{thm:virtfreeprod}
Let $G$ be a finitely generated pro-$p$ group containing an open subgroup
$H$ which has a non-trivial decomposition as free product,
i.e., there exists non-trivial closed subgroups $A,B\subsetneq H$
such that $H=A\amalg B$.
Then $G$ is isomorphic to the pro-$p$ fundamental group of
a finite graph of pro-$p$ groups with finite edge stabilizers.
\end{thm}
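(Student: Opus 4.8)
The plan is to reduce to the case of an open \emph{normal} subgroup, to describe it via the pro-$p$ Kurosh subgroup theorem, and to transfer the resulting free-product splitting to $G$ by means of pro-$p$ Bass--Serre theory; Theorem~\ref{thm:virt}, Proposition~\ref{prop:euler} and Corollary~\ref{cor:bound} will be used to keep the graph of groups finite. If $G$ is finite the statement is trivial, so assume $G$, hence $H$, infinite. First I would replace $H$ by its core $N=\core_G(H)=\bigcap_{g\in G}H^g$, an open normal subgroup of $G$ contained in $H$. If $A$ and $B$ are both finite then $H=A\amalg B$, and therefore $G$, is a finitely generated virtually free pro-$p$ group, and Theorem~\ref{thm:virt} already gives the conclusion; so assume that $A$ or $B$ is infinite. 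Being an open subgroup of the free pro-$p$ product $A\amalg B$, the group $N$ decomposes, by the pro-$p$ Kurosh subgroup theorem (cf.\ \cite{ribzal:hori}), as $N\simeq F_0\amalg C_1\amalg\cdots\amalg C_t$, where $F_0$ is a free pro-$p$ group of finite rank $r_0$ (finite since $N$ is finitely generated) and each $C_i\ne\triv$ equals $N\cap A^x$ or $N\cap B^x$ for suitable $x$; at least one $C_i$ is infinite. Since $[A\amalg B:A]=[A\amalg B:B]=\infty$ (the Cartesian kernel of $A\amalg B\to A\times B$ is a non-trivial free pro-$p$ group) while $[H:N]<\infty$, the subgroup $N$ cannot coincide with a single conjugate of $A$ or of $B$; hence $r_0+t\geq2$ and $N$ is contained in none of its own free factors.

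This decomposition of $N$ is the same datum as an action of $N$ on the standard pro-$p$ tree $D$ of a free pro-$p$ product (cf.\ \cite{melpav:trees}), with trivial edge stabilizers and with vertex stabilizers the $N$-conjugates of $C_1,\dots,C_t$ (and trivial stabilizers coming from $F_0$). Because $N\triangleleft G$ and the finite family $\{[C_1]_N,\dots,[C_t]_N\}$ of $N$-conjugacy classes of factors is determined by $N$ (Kurosh--Grushko uniqueness for finitely generated pro-$p$ groups), conjugation by elements of $G$ permutes this family; moreover each $N_G(C_i)$ is a finite extension of $C_i$, since free factors are self-normalizing and $N_G(C_i)\cap N=N_N(C_i)=C_i$. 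Using this $G$-equivariance together with the theory of pro-$p$ groups acting on pro-$p$ trees, I would then build an action of $G$ on a pro-$p$ tree $T$ whose restriction to $N$ still has trivial edge stabilizers and no global fixed vertex. Granting this, $\stab_G(e)$ meets $N$ trivially for every edge $e$ of $T$, hence is finite of order dividing $[G:N]$, and $G$ fixes no vertex of $T$, so the resulting splitting of $G$ is non-trivial.

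It remains to see that the quotient graph $T/G$ is finite. There are at most $t\,[G:N]$ orbits of vertices of $T$ whose stabilizer is commensurable with a conjugate of some $C_i$; removing the stars of all other vertices leaves a $G$-invariant subforest of $T$ on which $G$ acts with finite stabilizers, and its components assemble into finitely generated virtually free pro-$p$ groups, to which Theorem~\ref{thm:virt} applies while Proposition~\ref{prop:euler} and Corollary~\ref{cor:bound} bound the remaining numbers of orbits. Since $G$ is finitely generated, this forces $T/G$ to be a finite graph, and the pro-$p$ Bass--Serre correspondence then yields $G\simeq\Pi_1(\G,\Gamma,v_0)$ for a finite connected graph of pro-$p$ groups $(\G,\Gamma)$ with $\Gamma$ not a single vertex and with all edge groups finite --- namely the edge stabilizers of $T$.

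The main obstacle is the construction in the second paragraph: promoting the free-product splitting of the open normal subgroup $N$ to a splitting of $G$ over finite edge groups, i.e.\ producing the $G$-tree $T$. The finite index of $N$ in $G$ is exactly what keeps the edge stabilizers of $T$ finite, and one has to control the profinite topology on the trees involved and make sure the enlarged action does not collapse to a fixed point. One cannot short-cut this via a pro-$p$ Stallings theorem, since that is in effect Theorem~B, whose proof relies on the present result; the splitting of $G$ must be extracted directly from the tree $D$. The secondary difficulty --- the accessibility needed to make $\Gamma$ finite --- is precisely what the preparatory results Theorem~\ref{thm:virt}, Proposition~\ref{prop:euler} and Corollary~\ref{cor:bound} are there to handle.
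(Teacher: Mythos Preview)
The gap you yourself flag is real and you do not close it: promoting the $N$-tree $D$ to a $G$-tree $T$ with the stated properties. Knowing that conjugation by $G$ permutes the $N$-conjugacy classes of the free factors $C_i$ does not produce a continuous $G$-action on $D$, or on any pro-$p$ tree canonically attached to it, extending the $N$-action; $D$ is built from $N$-data and there is no functorial mechanism by which $G/N$ acts on it. In the discrete setting one can sometimes get around this via rigidity of Grushko decompositions or deformation-space arguments, but no such tool is available in the pro-$p$ context, and the paper's proof is organized precisely to avoid this step. Your finiteness argument for $T/G$ is likewise too sketchy to stand: the ``remove stars and assemble the rest into virtually free pieces'' manoeuvre is not justified, and it is unclear on which tree it is supposed to be carried out or why the pieces are finitely generated.

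The paper takes a completely different route and never constructs a $G$-tree. After passing to the core and refining the free decomposition of $H$ to $H=F\amalg H_1\amalg\cdots\amalg H_s$ with $\amalg$-indecomposable $H_i\not\simeq\Z_p$, one chooses for each open normal $U\subseteq H$ (in a basis with $H_i\not\subseteq U$) the normal closure $\tilde U=\cl(\langle U\cap H_i^g\mid g\in G,\ i\rangle)$, so that $H/\tilde U$ is a free pro-$p$ product of $F$ with the \emph{finite} groups $H_i\tilde U/\tilde U$; hence $G/\tilde U$ is finitely generated virtually free, and Theorem~\ref{thm:virt} already yields $G/\tilde U\simeq\Pi_1(\G_U,\Gamma_U,v_U)$ for a reduced finite graph of finite $p$-groups. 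A key Claim (proved using Propositions~\ref{prop:freeprod} and~\ref{prop:euler}(a)) shows that for $V\subseteq U$ the projection $G/\tilde V\to G/\tilde U$ carries the reduced structure over $\Gamma_V$ to a \emph{reduced} structure for $G/\tilde U$ over the same graph. Proposition~\ref{prop:euler}(b) bounds $|V(\Gamma_U)|+|E(\Gamma_U)|$ uniformly, so after passing to a cofinal system one may take $\Gamma_U=\Gamma$ constant; Corollary~\ref{cor:bound} then makes the set of reduced graphs of groups at each level finite, and a non-empty inverse limit produces $G\simeq\Pi_1(\G,\Gamma,v_0)$ with edge groups finite by Proposition~\ref{prop:euler}(a). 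Thus both the splitting of $G$ and the finiteness of the graph come simultaneously from the finite quotients $G/\tilde U$, not from any single $G$-tree.
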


\begin{proof}
By replacing $H$ by the core of $H$ in $G$ and applying the
Kurosh subgroup theorem for open subgroups (cf. \cite[Thm.~9.1.9]{ribzal:prof}),
we may assume that $H$ is normal in $G$. Refining the free decomposition if necessary
and collecting free factors isomorphic to $\Z_p$ we obtain a free decomposition 
\begin{equation}
\label{eq:Hdeco}
H=F\amalg H_1\amalg\cdots\amalg H_s,
\end{equation}
where $F$ is a free subgroup of rank $t$, and the $H_i$ are
$\amalg$-indecomposable finitely generated subgroups which are not
isomorphic to $\Z_p$ (cf. Prop.~\ref{prop:freeprod}(a)). By hypothesis,  $s+t\geq 2$. By
construction, one has for all $g\in G$ and for all
$i\in\{1,\ldots,s\}$ that $H_i^g$ is a free factor of $H$. Since
$H_i$ is indecomposable, we deduce from Proposition~\ref{prop:freeprod}(b)  
that the indecomposable non-free subgroup
$H_i^g$ of $H$ equals $H_j^h$ for some $j\in\{1,\ldots,s\}$. Thus
$\{H_i^g\mid g\in G,\ 1\leq i\leq s\,\}=\{H_i^h\mid h\in H,\ 1\leq
i\leq s\,\}$.

\noindent
{\bf Step 1:} Let $\caB$ be a basis of neighbourhoods of $1_G\in G$ consisting of open normal
subgroups $U$ of $G$ which are contained in $H$ with $H_i\not\subseteq U$ for every $i=1,\ldots s$. 
For $U\in\caB$ put
\begin{equation}
\label{eq:tU}
\tU=\cl(\langle\,U\cap {}H_i^g\mid g\in G,\ 1\leq i\leq s\,\rangle)=
\cl(\langle\,U\cap {}H_i^h\mid h\in H,\ 1\leq i\leq s\,\rangle).
\end{equation}
Then $\tU$ is a closed normal subgroup of $H$, and
\begin{equation}
\label{eq:defHtU}
H/\tU= F\amalg H_1\tU/\tU\amalg\cdots\amalg H_s\tU/\tU
\end{equation}
(cf. \cite[Prop.~1.18]{mel:quot}). The group $G/\tU$ contains the
open normal subgroup $H/\tU$ which is a finitely generated,
virtually free pro-$p$ group (since $U/\tU$ is free pro-$p$ by
Theorem 2.6 in \cite{melpav:trees}), and thus $G/\tU$ is a
finitely generated, virtually free pro-$p$ group.

\noindent
{\bf Step 2:} By Theorem \ref{thm:virt}, $G/\tU$  is isomorphic to the pro-$p$ fundamental group
$\Pi_1(\G_U,\Gamma_U, v_U)$ of a finite graph
of finite $p$-groups. Using the procedure described in subsection \ref{ss:proptree}
we may assume that $(\G_U,\Gamma_U)$ is
reduced. Hence from now on we may assume that for every $U\in \caB$ the vertex groups of
$G/\tU=\Pi_1(\G_U,\Gamma_U, v_U)$ are representatives of the
$G/\tU$-conjugacy classes of maximal finite subgroups.
Note that by Proposition~\ref{prop:euler}(a), one has $\G_U(e)\cap H/\tilde U=1$.

\noindent {\bf Step 3:} As explained before Lemma~\ref{lem:connectmap},
for $V\subseteq U$ both open and normal in
$G$ the decomposition $G/\tV=\Pi_1(\G_V,\Gamma_V, v_V)$ gives rise
to a natural decomposition of $G/\tU$ as the fundamental group
$G/\tU=\Pi_1(\G_{V,U},\Gamma_V, v_V)$ of a graph of groups
$(\G_{V,U},\Gamma_V)$. Moreover, by Lemma~\ref{lem:connectmap},
if $(\G_V,\Gamma_V)$ is reduced, then $(\G_{V,U},\Gamma_V)$ is reduced. Thus in this case one has 
a morphism
$\eta\colon (\G_{V},\Gamma_V)\longrightarrow (\G_{V,U},\Gamma_V)$
of reduced graph of groups such that the induced homomorphism on
the pro-$p$ fundamental groups coincides with the canonical
projection $\varphi_{UV}\colon G/\tV\longrightarrow G/\tU$.

\noindent {\bf Step 4:}  By Proposition~\ref{prop:euler}, the
number $|V(\Gamma_U)|+|\bE(\Gamma_U)|$ is bounded by $4(r+s)-1$.
Therefore, by passing to a cofinal system $\caC$ of $\caB$ if
necessary, we may assume that $\Gamma_U=\Gamma$ for each $U\in
\caC$. Then, by Corollary ~\ref{cor:bound},  the number of
isomorphism classes of finite reduced graphs of finite $p$-groups
$(\G^\prime_U,\Gamma)$ which are based on $\Gamma$ and satisfy
$G/\tU\simeq\Pi_1(\G^\prime,\Gamma,v_0)$ is finite. Suppose that
$\Omega_U$ is a set containing a copy of every such isomorphism
class. For $V\in\caC$, $V\subseteq U$, one has a map
$\omega_{V,U}\colon \Omega_V\to\Omega_U$ (cf. Step~3). Hence
$\Omega=\varprojlim_{U\in\caC}\Omega_U$ is non-empty. Let
$(\G^\prime_U,\Gamma)_{U\in\caC}\in\Omega$. Then
$(\G^\prime,\Gamma)$ given by $\G^\prime(x)=\varprojlim
\G^\prime_U(x)$ if $x$ is either a vertex or an edge of $\Gamma$,
is a reduced finite graph of finitely generated pro-$p$ groups
satisfying $G\simeq\Pi_1(\G^\prime,\Gamma,v_0)$. By
Proposition~\ref{prop:euler}(a), $\G^\prime(e)$ is finite for
every edge $e$ of $\Gamma$. This yields the claim.
\end{proof}




\end{document}